\def\smallddots{\mathinner{\raise7pt\hbox{.}\raise4pt\hbox{.}\raise1pt\hbox{.}}} 
\def\smallsdots{\mathinner{\raise1pt\hbox{.}\raise4pt\hbox{.}\raise7pt\hbox{.}}}
\DeclareMathOperator{\diag}{diag}
\DeclareMathOperator*{\argmin}{arg\,min}
\newtheorem{theorem}{Theorem}[section]
\numberwithin{equation}{section}
\numberwithin{table}{section}
\newtheorem{algorithm}{Algorithm}[section]
\newtheorem{problem}{Problem}[section]
\begin{document} 
 
\title{Superfast Approximate \\
 Least Squares Solution of a Highly \\ Overdetermined 
 Linear System of Equations} 
\author{Victor Y. Pan} 
\author{Qi Luan$^{[1],[a]}$  and 
Victor Y. Pan$^{[1,2],[b]}$\\
$^{[1]}$ Ph.D. Programs in  Computer Science and Mathematics \\
The Graduate Center of the City University of New York \\
New York, NY 10036 USA \\
$^{[2]}$ Department of Computer Science \\
Lehman College of the City University of New York \\
Bronx, NY 10468 USA \\
$^{[a]}$ qi\_luan@yahoo.com \\ 
$^{[b]}$ victor.pan@lehman.cuny.edu \\ 
http://comet.lehman.cuny.edu/vpan/  \\
}

\date{} 

\maketitle


\begin{abstract}
With a high probability ({\em whp})
the Sarl\`os randomized algorithm of 2006 outputs a nearly optimal least squares solution of a highly overdetermined  
 linear system of equations.
 We propose its simple deterministic variation that computes such a solution   
 for a random input whp
and therefore computes it deterministically for a large input 
class. Unlike the Sarl\`os
 original algorithm our variation performs computations at {\em sublinear cost} or, as we say, {\em superfast},
that is, by using much fewer memory cells and arithmetic operations than an input matrix has entries.
 Our extensive tests are in good accordance with this result.
 \end{abstract}
 
\paragraph{\bf Key Words:}
Overdetermined 
 Linear Systems,
Least Squares Solution,
Superfast algorithms


\paragraph{\bf 2020 Math. Subject  Classification:}
65Y20, 65F05, 68Q25, 68W25
 

 
\section{Introduction}\label{sintro}

{\bf The Linear Least Squares Problem (LLSP).} The computation of a least squares solution
 to a highly overdetermined 
 linear system of equations is a  hot research subject, fundamental for Matrix Computations and Big Data Mining and Analysis (see the books \cite{LH95} and \cite{B96},
  more recent works \cite{DMM06},  \cite{RT08}, \cite{BD09}, \cite{AMT10},
 \cite{DMMS11},  \cite{M11}, \cite{LWM18},   and the references therein). By  following {Bj{\"o}rk's book \cite{B15} we call this task  Linear Least Squares Problem and hereafter refer to it as
 {\em LLSP}. Some authors call it differently, e.g., 
 {\em Least Squares Approximation  Problem} \cite{M11}.
  
   The matrices that define Big Data 
are frequently so immense that realistically 
one can  access only a tiny fraction of their entries and 
thus must  
perform computations at sublinear cost or, as we say,
{\em superfast} -- by using much fewer    
  memory cells and arithmetic operations 
than an input matrix has 
entries.

 {\bf Our progress.} Unfortunately the output of any   superfast algorithm  is
 extremely far from the solution to LLSP  problem on the worst case inputs (see the beginning of Section \ref{sdual}), but we 
  circumvent this problem 
 based on a simple new application of the Sarl\`os randomized algorithm of \cite{S06}.
 This algorithm  
  multiplies an input matrix  of LLSP
 by a rectangular multiplier, so that the resulting LLSP has a much smaller size and 
 can be readily solved superfast.
 The original input matrix 
  can be assumed to be orthogonal without loss of generality, and for the multiplier 
  one can choose a rectangular Gaussian random matrix, that is, a matrix  filled with  independent identically distributed Gaussian    (aka normal) random variables. Hereafter we  call such a matrix just {\em Gaussian}. 
  
  Its product with an orthogonal
 matrix is Gaussian, by virtue of orthogonal invariance, and
 Sarl\`os proves that the solution of  
  the resulting LLSP closely approximates
  the solution of the original LLSP whp.
 Now we observe that his result still holds 
 for the solution of the {\em dual LLSP}, with a Gaussian input matrix and any orthogonal multiplier (see our Theorem \ref{thLLSPdd}). 
 
 Is this result of any interest?  Yes, because by fixing a sparse
 multiplier we  devise  a
{\em superfast deterministic algorithm} that 
computes a nearly optimal solution to LLSP
for a large class of highly overdetermined inputs. 

We say ``a large class", but could have argued that in a sense this holds for most inputs, because the nearly optimal solution 
is obtained for Gaussian input whp and because such a claim is in a good accordance 
with the results of our tests for both synthtic and real world inputs.
 
 Of course one would prefer to solve LLSP  
  superfast for all inputs, but
 
 (a) this is impossible,
 
 (b) the user should be satisfied even if
 the problem is solved just for the input   
 class of her/his interest,
 
 (c) Theorem \ref{thLLSPdd} implies that this outcome is quite likely for many applications,  and
    
    (d) the results of our extensive tests with both synthetic and real world inputs are in good accordance with that theorem.

 For a simple summary: {\em  it is impossible to ensure superfast solution for ANY INPUT,
 but we describe a simple superfast algorithm that works for MANY INPUTS, and we provide evidence to this by proving that the algorithm works whp for a random input and by supporting it with extensive numerical tests.}
 
{\bf Related works.}
Our present result is just a new example of dual matrix algorithms, which we earlier 
devised, analyzed, and tested for other fundamental matrix computations such as Gaussian elimination with no pivoting
 in \cite{PQY15} and \cite{PZ17} and   Low Rank Approximation (LRA) of a 
 matrix in \cite{PZ16}, \cite{PLSZ16}, \cite{PLSZ17},
  \cite{PLSZ20}, \cite{PLSZa}, \cite{PLa},  
 \cite{LPSa}, and \cite{Pa}.\footnote{The papers \cite{PLSZ16} and
 \cite{PLSZ17}  provide first formal support for  
superfast (dual) LRA.}
We hope that these studies together with our present work will motivate research efforts towards devising superfast deterministic algorithms for matrix computations that would work for  large classes of inputs, thus providing an alternative to the known randomized algorithms that   
 whp solve
these probems for all inputs
but perform much slower.

 {\bf Organization of the paper.}
 In the next section we recall the LLSP and  its  randomized
  approximate solution by Sarl\`os of \cite{S06}.  We present our superfast variation of his algorithm
   in  Section \ref{sdual}. In Section \ref{ststs}, the contribution  of the first author,
  we cover our numerical tests.

 
\section{Randomized Approximate Solution of an Overdetermined Linear System of Equations}\label{sext} 
     

\begin{problem}\label{pr1} {\em [Least Squares Solution of an Overdetermined Linear System of Equations (LLSP).]}
Given two integers $m$ and $d$ such that $1\le d< m$,
a matrix $A\in \mathbb R^{m\times d}$, and a vector ${\bf b}\in \mathbb C^{m}$,
compute a vector  
${\bf x}\in \mathbb R^{d}$ that minimizes the spectral norm $||A{\bf x}-{\bf b}||$
or equivalently  computes the subvector
${\bf x}=(y_j)_{j=0}^{d-1}$ of the vector 
\begin{equation}\label{eqLLSPmy}
{\bf y}=(y_j)_{j=0}^d={\rm argmin}_{\bf v}~
||M{\bf v}||~{\rm such~that}~M=(A~|~{\bf b})~
{\rm and}~
{\bf v}=\begin{pmatrix}  {\bf x} \\
 -1
\end{pmatrix}. 
\end{equation} 
\end{problem}

The minimum norm solution to this problem is given by the vector ${\bf x}=A^+{\bf b}$
for $A^+$ denoting  the Moore--Penrose pseudo inverse of $A$; 
 $A^+{\bf b}=(A^*A)^{-1}A^*{\bf b}$ if a matrix $A$ has full rank $d$.



 
\begin{algorithm}\label{algapprls} 
({\rm  Randomized Approximate Solution of  LLSP from \cite{S06}.})

 
\begin{description}

 
\item[{\sc Input:}] 
An $m\times (d+1)$  matrix $M$.

\item[{\sc Output:}]  
A vector ${\bf x}\in \mathbb R^{d}$
 approximating a solution 
of Problem \ref{pr1}.


\item[{\sc Initialization:}] 
 Fix an 
 integer $s$ such that $d\le s\ll m$. 


\item[{\sc Computations:}]

\begin{enumerate}
\item 
Generate a matrix  $F\in \mathbb R^{s\times m}$.
\item 
Compute a solution ${\bf x}$ of Problem \ref{pr1}
for the $s\times (d+1)$ matrix $FM$. 
\end{enumerate}


\end{description}


\end{algorithm}

 
 Clearly the transition to an input matrix 
 $FM$ simplifies Problem \ref{pr1}
 because its size decreases, and the simplification is dramatic where $s\ll m$, while  
 the following theorem shows that the algorithm still outputs nearly optimal approximate solution to Problem \ref{pr1} for $M$ whp
 if ${\sqrt s}~F$ is in the linear space
 $\mathcal G^{s\times m}$ of $s\times m$ Gaussian   
 matrices.\footnote{Such an approximate  solution
serves as a pre-processor for practical implementation of
numerical linear algebra algorithms 
for Problem \ref{pr1} of least squares computation \cite[Section 4.5]{M11}, \cite{RT08}, \cite{AMT10}.}

 
\begin{theorem}\label{thLLSPp}{\rm (Error Bound for Algorithm \ref{algapprls}. See \cite{S06} or \cite[Theorem 2.3]{W14}.)}   
Let us be given 
two integers $s$ and  $d$
 such that $0<d\le s$, two  
 matrices $M\in \mathbb R^{m\times (d+1)}$ and Gaussian $F \in \mathcal G^{s\times m}$,
 and  two tolerance values $\gamma$ and  
$\epsilon$  such that  
\begin{equation}\label{eqxigm}
0<\gamma<1,~ 
0<\epsilon<1,~{\rm and}~s=\Big(d+\log\Big(
\frac{1}{\gamma}\Big)\Big)~\frac{\eta}{\epsilon^{2}}
\end{equation}
 for a constant $\eta$. 
Then  
\begin{equation}\label{eqLLSPp}
{\rm Probability}\Big\{1-\epsilon\le 
\frac{1}{\sqrt s}~\frac{||FM{\bf y}||}{||M{\bf y}||}\le 1+\epsilon~{\rm for~all~vectors}~
  {\bf y}\neq {\bf 0}\Big\}\ge 1-\gamma.
\end{equation}
\end{theorem}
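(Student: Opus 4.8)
\noindent\emph{Proof proposal.} The statement is the by-now classical fact that, up to the scaling $1/\sqrt s$, a Gaussian matrix is an $\epsilon$-subspace embedding for $\range(M)$, and I sketch the standard argument underlying \cite{S06} and \cite[Theorem~2.3]{W14}. \textbf{The plan is first to reduce to the extreme singular values of a Gaussian matrix.} Put $k=\rank(M)\le d+1$ and let $U\in\mathbb R^{m\times k}$ be a matrix with orthonormal columns spanning $\range(M)$. As $\mathbf y$ runs over the vectors with $M\mathbf y\neq\mathbf 0$ (the only ones for which the ratio in \eqref{eqLLSPp} is defined; in LLSP $M$ has full column rank and this is every $\mathbf y\neq\mathbf 0$), we may write $M\mathbf y=U\mathbf z$ with $\mathbf z\in\mathbb R^{k}\setminus\{\mathbf 0\}$ and $\|M\mathbf y\|=\|U\mathbf z\|=\|\mathbf z\|$, so the event in \eqref{eqLLSPp} is equivalent to the assertion that every singular value of $\tfrac1{\sqrt s}\,FU$ lies in $[1-\epsilon,\,1+\epsilon]$.

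\textbf{Next I would use rotational invariance of the Gaussian law to collapse the problem to one fixed $s\times k$ Gaussian matrix.} The rows of $F$ are i.i.d.\ standard Gaussian vectors in $\mathbb R^{m}$; since $U$ has orthonormal columns, the rows of $G:=FU\in\mathbb R^{s\times k}$ are i.i.d.\ standard Gaussian vectors in $\mathbb R^{k}$, i.e.\ $G$ is an $s\times k$ Gaussian matrix (with $s\ge k$). Thus it suffices to show that $\sigma_{\min}(G)$ and $\sigma_{\max}(G)$ are both close to $\sqrt s$.

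\textbf{Then I would invoke the nonasymptotic singular value bound for a Gaussian matrix:} for every $t>0$,
\begin{equation*}
\sqrt s-\sqrt k-t\ \le\ \sigma_{\min}(G)\ \le\ \sigma_{\max}(G)\ \le\ \sqrt s+\sqrt k+t
\end{equation*}
with probability at least $1-2\exp(-t^{2}/2)$. Dividing by $\sqrt s$, the singular values of $\tfrac1{\sqrt s}G$ lie within $\delta:=\sqrt{k/s}+t/\sqrt s$ of $1$. Taking $t=\sqrt{2\log(2/\gamma)}$ bounds the failure probability by $\gamma$ and gives $\delta\le\sqrt{(d+1)/s}+\sqrt{2\log(2/\gamma)/s}=O\!\big(\sqrt{(d+\log(1/\gamma))/s}\,\big)$; under the hypothesis $s=(d+\log(1/\gamma))\,\eta/\epsilon^{2}$ from \eqref{eqxigm}, this is $\le\epsilon$ once the absolute constant $\eta$ is chosen large enough, which is precisely \eqref{eqLLSPp}.

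\textbf{The main obstacle is the singular value concentration used in the third step;} the rest is the rotation-invariance observation plus bookkeeping. The slickest proof of that ingredient combines Gordon's comparison inequality (a Slepian/Gordon min--max estimate), which yields $\mathbb E\,\sigma_{\max}(G)\le\sqrt s+\sqrt k$ and $\mathbb E\,\sigma_{\min}(G)\ge\sqrt s-\sqrt k$, with Gaussian concentration of measure applied to the $1$-Lipschitz maps $G\mapsto\sigma_{\max}(G)$ and $G\mapsto\sigma_{\min}(G)$. A self-contained alternative replaces Gordon's inequality by an $\tfrac14$-net argument: cover the unit sphere $S^{k-1}$ by a net $\mathcal N$ of cardinality at most $9^{k}$, note that $\|G\mathbf z\|^{2}$ is $\chi^{2}_{s}$-distributed for each fixed unit $\mathbf z$, apply a $\chi^{2}$ tail bound and a union bound over $\mathcal N$, and pass from the net to the whole sphere in the usual way; this route yields a worse constant $\eta$ but the same conclusion.
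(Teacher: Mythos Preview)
Your sketch is correct and follows the standard route to the Gaussian subspace-embedding guarantee: reduce to the singular values of $\tfrac{1}{\sqrt s}FU$ for an orthonormal $U$ spanning $\range(M)$, use rotational invariance to identify $FU$ with an $s\times k$ Gaussian matrix, and then invoke the Davidson--Szarek/Gordon singular-value concentration (or the equivalent net-plus-$\chi^2$ argument). This is essentially the argument behind the cited references.

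There is nothing to compare against in the paper itself, however: the paper does \emph{not} give its own proof of this theorem. It is stated as a known result with attribution to \cite{S06} and \cite[Theorem~2.3]{W14}, and is then used as a black box in the one-line proof of Theorem~\ref{thLLSPdd} (where the roles of $F$ and $M$ are swapped via unitary invariance). So your write-up supplies what the paper deliberately omits; the approach you take is the one underlying the references the paper cites.
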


  
 The computation of the matrix $FM$
 involves order of $dsm\ge d^2m$ flops;  for $m\gg s$
this dominates the overall arithmetic computational cost  of the solution
of Problem \ref{pr1}.  

The current record upper estimate for this cost is  $O(d^2m)$
 (see \cite{CW17}, \cite[Section 2.1]{W14}), while 
 the record lower bound of \cite{CW09} 
 has  order $ (m + d)s\epsilon^{-1} \log(md)$ provided that the relative output error norm is within a
factor  of $1+\epsilon$ from its minimal value. 


\section{Superfast Dual LLSP}\label{sdual}  
  

Any   superfast algorithm for LLSP   misses an input entry $m_{i,j}$ for some pair  $i$ and $j$; therefore for its output the  norm $||M{\bf y}||$ exceeds the norm 
$||M{\bf x_{\rm optimal}}||$by arbitrarily
large factor for the worst case input $M$. Indeed 
modification of  $m_{i,j}$ does not change the output of such an algorithm but can dramatically change an
optimal solution to the
 LLSP.\footnote{Here we assume that
 $y_j\neq 0$. Otherwise we could delete the  $j$th column of $M$, thus decreasing the input size.}
 
The argument can be immediately extended to
randomized algorithms, and so no  superfast deterministic or randomized algorithm can solve Problem \ref{pr1} for
all inputs.  E.g., the  randomized Algorithm \ref{algapprls} 
outputs    nearly optimal solution of LLSP whp for any input, but it is not superfast.
Now we can repeat our argument in the Introduction that the user would be happy even if  the problem is solved just for a class of inputs of her/his interest, and this   
 should motivate devising superfast algorithms that would solve LLSP for a large class of inputs. 
Our next theorem implies that a simple variation of  Algorithm \ref{algapprls} is quite likely to do this. 

Namely, by virtue of that theorem 
the algorithm still outputs a nearly optimal solution of Problem \ref{pr1}
whp in the case of a random Gaussian input $M$ and {\em any} properly scaled unitary multiplier $F$.  By choosing a proper sparse multiplier we
arrive at a superfast
algorithm.

 
\begin{theorem}\label{thLLSPdd} {\em [Error Bounds for Dual LLSP.]}
Suppose that we are given three  integers $s$, $m$, and  $d$ such that $0<d\le s<m$, and
two tolerance values $\gamma$ and  
$\epsilon$  satisfying (\ref{eqxigm}).
Define  a unitary matrix 
 $Q_{s,m}\in  \mathbb R^{s\times m}$
 and
a matrix $G_{m,d+1}\in \mathcal G^{m\times (d+1)}$ and write
\begin{equation}\label{eqfm}
 F:=a~Q_{s,m}~{\rm and}~M:=b~G_{m,d+1}
 \end{equation}
 for two scalars $a$ and $b$ such that
 $ab\sqrt s=1$.
Then
$${\rm Probability}\Big\{1-\epsilon~\le \frac{||FM{\bf z}||}{||M{\bf z}||}\le 1+\epsilon~{\rm for~all~vectors}~
  {\bf z}\neq {\bf 0}\Big\}\ge 1-\gamma.$$
\end{theorem}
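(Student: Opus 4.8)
The plan is to convert the assertion ``for all $\mathbf z\neq\mathbf 0$'' into a spectral statement about a single $(d+1)\times(d+1)$ random matrix and then apply non-asymptotic Wishart concentration, condition \eqref{eqxigm} being exactly what makes that concentration tight enough. First I would reduce to singular values. Since $G_{m,d+1}$ is Gaussian and $m\ge d+1$ it has full column rank almost surely; let $G_{m,d+1}=\Theta H$ be its polar decomposition, so that $\Theta\in\mathbb R^{m\times(d+1)}$ has orthonormal columns, $\range(\Theta)=\range(G_{m,d+1})$, and $H=(G_{m,d+1}^{*}G_{m,d+1})^{1/2}$ is invertible and symmetric positive definite. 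Pulling the scalars out of the norms, using $\Theta^{*}\Theta=I_{d+1}$, and substituting $\mathbf u:=H\mathbf z$ gives, for every $\mathbf z\neq\mathbf 0$,
\[
\frac{\|FM\mathbf z\|}{\|M\mathbf z\|}
=\frac{|ab|\,\|Q_{s,m}\Theta H\mathbf z\|}{|b|\,\|\Theta H\mathbf z\|}
=|a|\,\frac{\|Q_{s,m}\Theta\,\mathbf u\|}{\|\mathbf u\|}.
\]
Hence the event in the theorem coincides with the event that all $d+1$ singular values of $Q_{s,m}\Theta$ lie in $\bigl[(1-\epsilon)/|a|,\,(1+\epsilon)/|a|\bigr]$, equivalently that the eigenvalues of $\Theta^{*}Q_{s,m}^{*}Q_{s,m}\Theta$ all lie within a factor $(1\pm\epsilon)^{2}$ of $1/|a|^{2}$. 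When $Q_{s,m}$ has orthonormal rows, $Q_{s,m}^{*}Q_{s,m}$ is a rank-$s$ orthogonal projection whose compression onto the random subspace $\range(G_{m,d+1})$ has all eigenvalues near $s/m$; the normalization $ab\sqrt s=1$ (i.e.\ the proper scaling of the unitary $Q_{s,m}$) is what matches $1/|a|^{2}$ to that value, and the general normalization of $Q_{s,m}$ differs only by an overall rescaling.

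Next I would exploit rotational invariance of the Gaussian. Writing $Q_{s,m}=[\,I_s\mid 0\,]\,O$ with $O\in\mathbb R^{m\times m}$ orthogonal (possible since $Q_{s,m}$ has orthonormal rows) and using that $OG_{m,d+1}$ has the same distribution as $G_{m,d+1}$, we may assume $Q_{s,m}=[\,I_s\mid 0\,]$. The singular values of $Q_{s,m}\Theta$ depend only on $\range(G_{m,d+1})$, so we may take $\Theta=G_{m,d+1}(G_{m,d+1}^{*}G_{m,d+1})^{-1/2}$; splitting $G_{m,d+1}$ row-wise into the independent Gaussian blocks $G_1\in\mathcal G^{s\times(d+1)}$ (its top $s$ rows) and $G_2\in\mathcal G^{(m-s)\times(d+1)}$ (its remaining rows) then gives
\[
\Theta^{*}Q_{s,m}^{*}Q_{s,m}\Theta=(W_1+W_2)^{-1/2}\,W_1\,(W_1+W_2)^{-1/2},
\qquad W_1:=G_1^{*}G_1,\ \ W_2:=G_2^{*}G_2,
\]
where $W_1$ is a Wishart matrix with $s$ degrees of freedom and $W_1+W_2=G_{m,d+1}^{*}G_{m,d+1}$ is Wishart with $m$ degrees of freedom; the eigenvalues of this matrix-variate Beta matrix are what we must confine to $(1\pm\epsilon)^{2}\,s/m$.

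The remaining step is concentration. By standard non-asymptotic estimates --- equivalently, bounds on the extreme singular values of Gaussian matrices --- the events $\|\tfrac1s W_1-I_{d+1}\|\le\delta$ and $\|\tfrac1m(W_1+W_2)-I_{d+1}\|\le\delta$ hold simultaneously with probability at least $1-\gamma$, where $\delta=O\!\bigl(\sqrt{(d+\log(1/\gamma))/s}\,\bigr)$ (the bound for $W_1+W_2$ is no worse, since $m>s$). By Weyl's inequality every eigenvalue of $(W_1+W_2)^{-1/2}W_1(W_1+W_2)^{-1/2}$ then lies in $\bigl[\tfrac sm\cdot\tfrac{1-\delta}{1+\delta},\,\tfrac sm\cdot\tfrac{1+\delta}{1-\delta}\bigr]$, and choosing $s=(d+\log(1/\gamma))\,\eta/\epsilon^{2}$ as in \eqref{eqxigm} lets one fix the constant $\eta$ so that $\tfrac{1-\delta}{1+\delta}\ge(1-\epsilon)^{2}$ and $\tfrac{1+\delta}{1-\delta}\le(1+\epsilon)^{2}$. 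This is precisely the singular-value bound obtained in the first step, so the theorem follows with failure probability at most $\gamma$.

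I expect the genuine obstacle to be the uniformity over all $\mathbf z$: a lazy route would fix $\mathbf z$, observe that $M\mathbf z$ is a Gaussian vector, apply a Johnson--Lindenstrauss-type estimate in the spirit of Theorem~\ref{thLLSPp}, and union-bound over an $\epsilon$-net of the $(d+1)$-dimensional parameter space, but that is far too wasteful; the reduction above instead controls the entire spectrum of the fixed-size matrix $Q_{s,m}\Theta$ at once. Note that Theorem~\ref{thLLSPp} cannot be invoked directly, because there the Gaussian factor is the sketching matrix whereas here it is the data matrix, so a parallel but separate argument is genuinely needed. The remaining points are routine bookkeeping --- the almost-sure full column rank of $G_{m,d+1}$, the fact that \eqref{eqxigm} keeps $s$ large enough relative to $d$ for $G_1$ to have full column rank too (so that $Q_{s,m}\Theta$ has no vanishing singular value), and the passage between the $(1\pm\epsilon)$ and $(1\pm\epsilon)^{2}$ factors --- all of which are absorbed into $\eta$.
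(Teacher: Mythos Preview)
The paper's entire proof is one sentence: ``The claim follows from Theorem~\ref{thLLSPp} because the $s\times(d+1)$ matrix $\frac{1}{ab}FM$ is Gaussian by virtue of unitary invariance of Gaussian matrices.'' That is, since $Q_{s,m}$ has orthonormal rows, $\frac{1}{ab}FM=Q_{s,m}G_{m,d+1}$ is itself an $s\times(d+1)$ Gaussian matrix, and the Sarl\'os bound of Theorem~\ref{thLLSPp} --- which at bottom is the statement that the singular values of such a Gaussian matrix concentrate near $\sqrt s$ --- is quoted directly. Your assertion that ``Theorem~\ref{thLLSPp} cannot be invoked directly, because there the Gaussian factor is the sketching matrix whereas here it is the data matrix'' is precisely where you diverge from the paper: unitary invariance erases that distinction, and the paper uses it to collapse the dual problem onto the primal one in a single stroke. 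Your polar decomposition, matrix-variate Beta reduction, and Wishart concentration are all correct, but they reprove from scratch exactly the Gaussian singular-value bound that the paper simply cites.

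There is also a real slip in your write-up. You correctly reduce the claim to confining the eigenvalues of $\Theta^{*}Q_{s,m}^{*}Q_{s,m}\Theta$ to $[(1-\epsilon)^2/|a|^2,\ (1+\epsilon)^2/|a|^2]$, and you correctly show those eigenvalues concentrate near $s/m$. But your sentence ``the normalization $ab\sqrt s=1$ \dots\ is what matches $1/|a|^{2}$ to that value'' is false: the single constraint $ab\sqrt s=1$ leaves $|a|$ (equivalently $|b|$) free, so nothing in the hypothesis forces $1/|a|^{2}=s/m$. Your own computation actually shows that one needs $|a|\approx\sqrt{m/s}$, i.e.\ $|b|\approx 1/\sqrt m$, which \eqref{eqfm} alone does not provide. (The paper's one-liner, read literally, has the mirror-image lacuna: it controls $\|FM\mathbf z\|/\|\mathbf z\|$ via the Gaussianity of $FM$ but is silent about the denominator $\|M\mathbf z\|$.) This is a looseness in the theorem's stated scaling rather than in your method, but you should flag it rather than claim it is resolved.
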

\begin{proof} 
The claim follows from Theorem \ref{thLLSPp} because  the $s\times (d+1)$  matrix 
$\frac{1}{ab}FM$ is Gaussian by virtue of unitary invariance of Gaussian matrices.
\end{proof}
 
The theorem shows that for a Gaussian matrix $M$ and any properly scaled unitary matrix $F$ of (\ref{eqfm}) 
 Algorithm \ref{algapprls}  outputs a solution of Problem \ref{pr1} that whp  is  optimal up to 
a factor lying in the range  $[1-\epsilon,1+\epsilon]$.
     

\section{Numerical Tests for LLSP}\label{ststs}


In this section we present the results of our tests  of  Algorithm \ref{algapprls}
for both synthetic and real-world data.
We worked with 
 random unitary
multipliers, let 
${\bf x}: =\argmin_{\bf u}||FA{\bf u}- F{\bf b}||$, and computed the relative residual norm 
$$\displaystyle \frac{|| A{\bf x} - {\bf b}||}{\min_{\bf u} ||A{\bf u}-{\bf b}||}.$$ 
In our tests these  ratios quite closely approximated 1 from above.

We used the following random scaled unitary multipliers $F\in \mathbb{R}^{s\times m}$:  

(i) full rank $s\times m$ submatrices of $m\times m$ 
random permutation matrices,

(ii) ASPH matrices from \cite{PLSZ20} and \cite{PLSZa}, which we output after performing just the first three recursive steps out of $\log_2{m}$ steps involved into the generation of the  matrices 
of subsampled randomized Hadamard thansform, and 

(iii) block permutation matrices formed by filling  $s\times m$ matrices with $c=m/s$ identity matrices, each of size $s\times s$, and performing random column permutations; we have chosen $c = 8$ to match the computational cost of the application of ASPH multipliers.
  
 For comparison we also included the
 test results with $s\times m$ Gaussian multipliers.

 We performed our tests on a machine with Intel Core i7 processor running Windows 7 64bit; 
we invoked the \textit{lstsq} function from Numpy 1.14.3 for solving the LLSPs.

\subsection{Synthetic Input Matrices}
 
 For synthetic inputs, we generated input matrices $A\in \mathbb{R}^{m\times d}$   by following
 (with a few modifications) the  recipes of extensive tests in \cite{AMT10},  which compared the running time 
of the regular LLSP problems and the reduced ones with WHT, DCT, and DHT pre-processing. 

We used input
matrices $A$  of size $4096 \times 50$ and $16834 \times 100$ being either
Gaussian matrices or random ill-conditioned matrices.
 We generated the input vectors
 ${\bf b} =\frac{1}{||A{\bf w}||} A{\bf w} +
 \frac{0.001}{||{\bf v}||}{\bf v}$,
 where ${\bf w}$ and  ${\bf v}$ were random
Gaussian vectors of size $d$ and $m$, respectively, 
and so  
 ${\bf b}$ 
was
in the range of $A$
up to a smaller term 
$\frac{0.001}{||{\bf v}||}{\bf v}$.
 
Figure \ref{LLSPRandom} displays the test results for  Gaussian input matrices. 

Figure \ref{LLSPill} displays the test results for  ill-conditioned random inputs 
defined by their SVD $A = U\Sigma V^*$ where 
 the unitary matrices 
  $U$ and $V$  of singular vectors were given by the Q factors in 
 QR-factorization of 
 two independent Gaussian matrices
 and where $\Sigma=\diag(\sigma_j)_j$ with 
  $\sigma_j=10^{5-j}$ for $j = 1, 2\dots , 14$ and $\sigma_j=10^{-10}$ for $j>14$.


Our input matrices $A$ were  highly over-determined, having many more rows than columns.  
 We have chosen $s = dh$, $h= 2, 3,  4, 5, 6$ for 
the multipliers $F$.   By decreasing the ratio $h = s/d$ and therefore the integer $s$ we would
 accelerate the computations of our algorithm,
but we had
to keep it large enough in order to yield
accurate
 solution.

We performed 100 tests with 100 random multipliers for every triple of the input class, multiplier class, and test sizes
(cf. (\ref{eqxigm})) and computed
the  mean of the 100 relative residual norms of the output.
 
We  display the test results
in Figures \ref{LLSPRandom} and \ref{LLSPill} with  ratio $h$ marked on the horizontal axis. 
The tests show  that our multipliers were consistently
effective for random matrices.   The performance was not affected 
by 
the 
conditioning of input matrices.

\begin{figure}
\includegraphics[width = \textwidth]{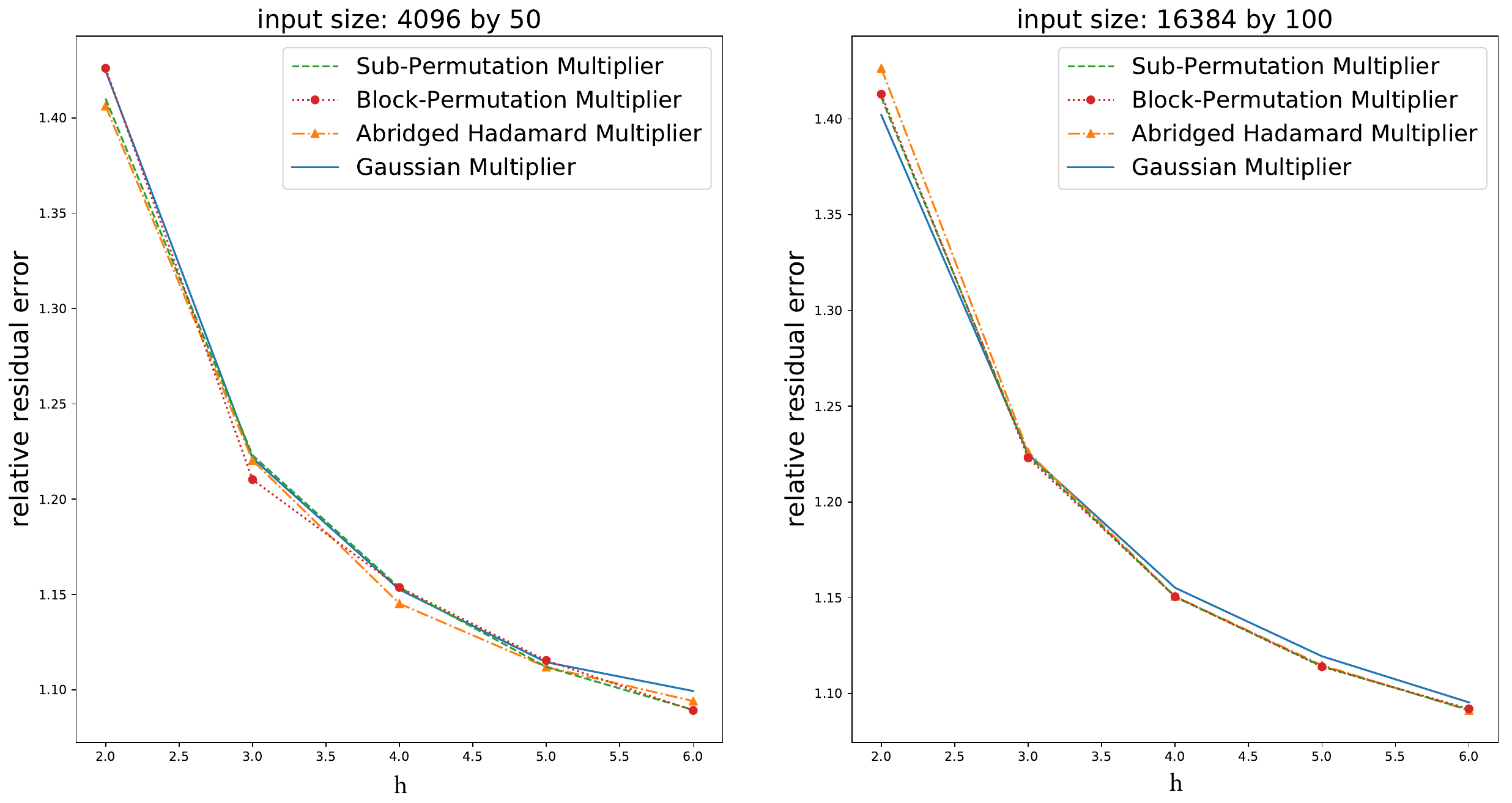}
\caption{Relative  residual norm in tests with Gaussian inputs}\label{LLSPRandom}
\end{figure}

\begin{figure}
\includegraphics[width = \textwidth]{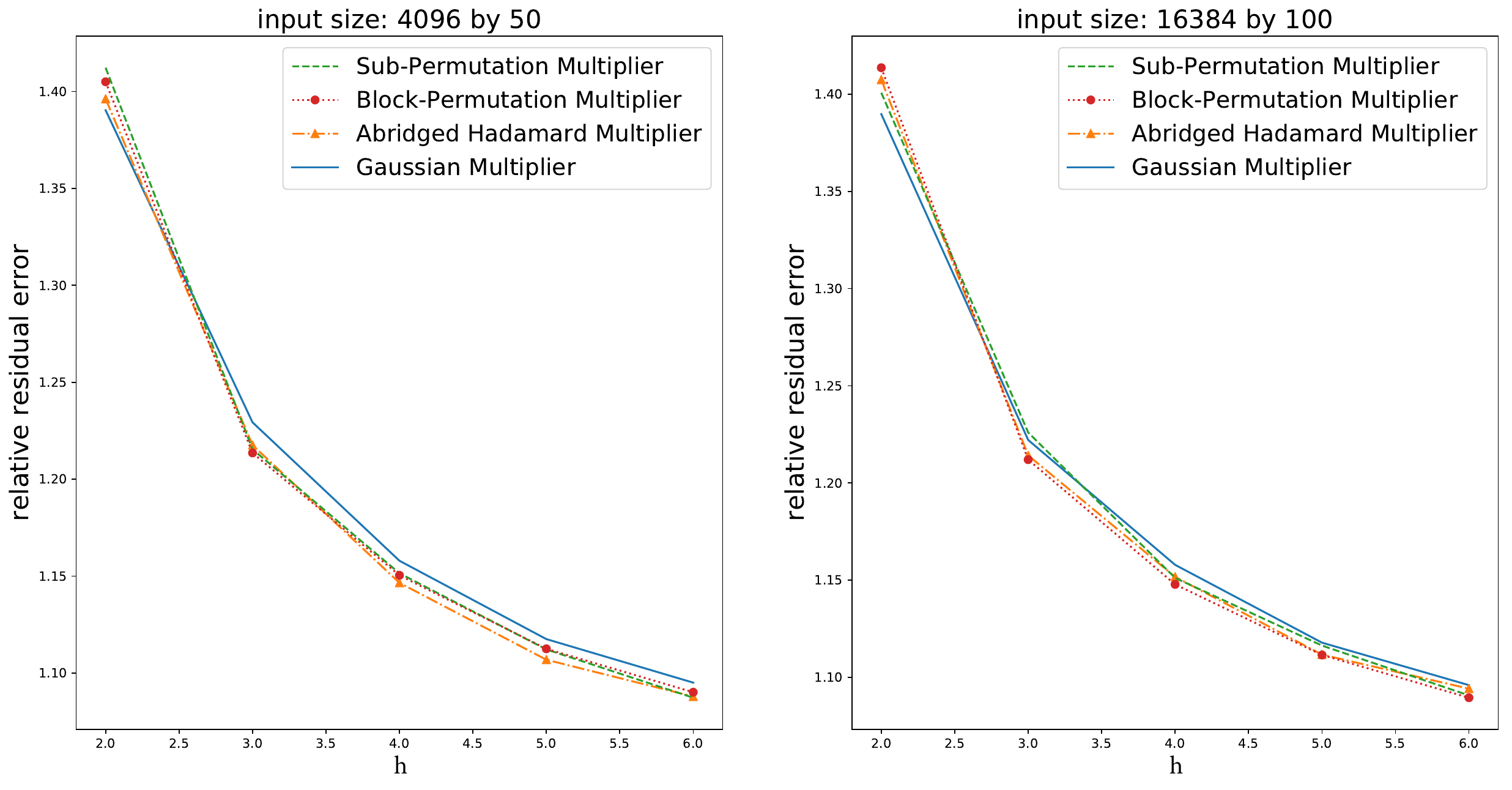}
\caption{Relative residual norm in tests with ill-conditioned random inputs}\label{LLSPill}
\end{figure}

\subsection{Red Wine Quality Data and California Housing Prices Data}

In this subsection we present the test results for real world inputs, namely the Red Wine Quality Data 
and California Housing Prices Data. For each triple of the datasets, multiplier type and multiplier size, 
we repeated the test for 100 random multipliers and computed the mean 
relative residual norm.
The results for these two 
input classes
are displayed in Figures \ref{LLSPRedWine} 
and  \ref{LLSPCaliHousing}.

\medskip

11 physiochemical feature data
of the Red Wine Quality Data  such as {\em fixed acidity},
{\em residual sugar level}, and {\em pH level}
 were the input  variables in our tests
and one sensory data {\em wine quality} were the output data;
the tests covered 1599 variants of the Portuguese "Vinho Verde" wine. 
See further information in \cite{CCAMR09}.  This dataset is often applied in regression tests that use physiochemical data of a specific wine 
in order to predict its quality, 
and among various types of regression
LLSP algorithms are considered  a popular choice.

From this dataset we constructed a $2048\times 12$ input matrix $A$ with each row representing one variant of red wine,
and with columns consisting of a bias column and  eleven physiochemical feature columns.
 The input vector ${\bf b}$ is a vector consisting of the 
wine quality level (between 0 and 10) for each variant.
 We kept the 1599 rows of the original data, 
 padded the rest of the rows with zeros, and performed a full row permutation of $A$. 

\medskip

The California Housing Prices data appeared in \cite{PB97} and were collected from the 1990 California Census,
including 9 attributes for each of the 20,640 Block Groups observed. This dataset is used for regression
tests in order to predict the {\em median housing value} of a certain area given collected information of this area, such as
{\em population}, {\em median income}, and {\em housing median age}. 

We randomly selected 16,384 observations from the dataset in order to construct an independent input matrix 
$A_0$ of size $16384\times 8$ and a dependent input vector ${\bf b} \in \mathbb{R}^{16384}$. Furthermore,
we augmented $A_0$ with a single bias column, i.e. 
$A = \begin{bmatrix}A_0 & {\bf 1}  \end{bmatrix} $.

\medskip

We computed approximate solutions by applying the algorithm supporting Theorem \ref{thLLSPdd} and using our multipliers.
 Figure \ref{LLSPRedWine} and \ref{LLSPCaliHousing} show that the resulting solution 
was almost as accurate as the optimal solution. Moreover, using Gaussian multipliers rather than  our sparse multipliers
only 
 enabled a marginal decrease of relative residual norm. 

\begin{figure}
\includegraphics[width = \textwidth]{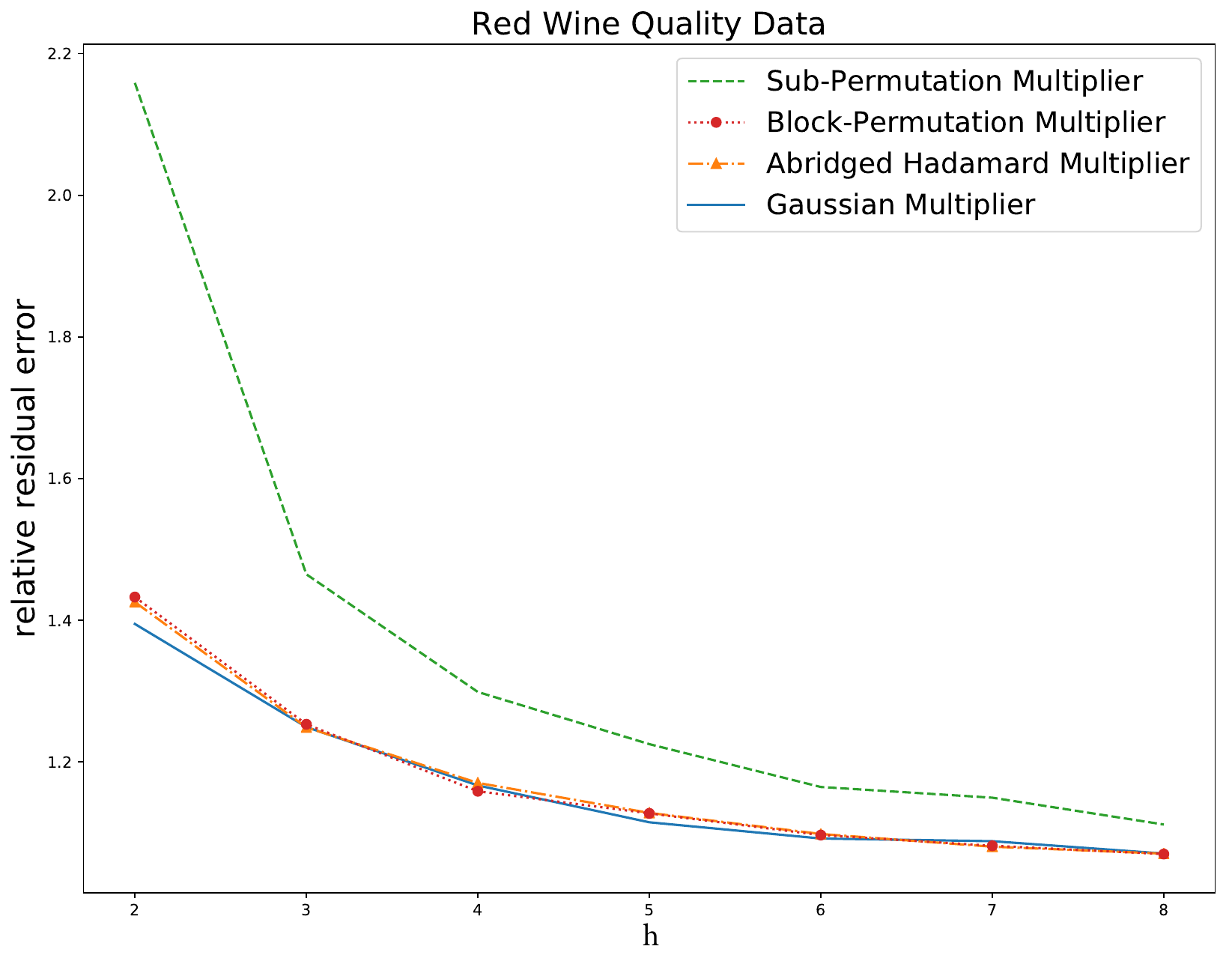}
\caption{Relative residual norm in tests with Red Wine Quality Data}\label{LLSPRedWine}
\end{figure}

\begin{figure}
\includegraphics[width = \textwidth]{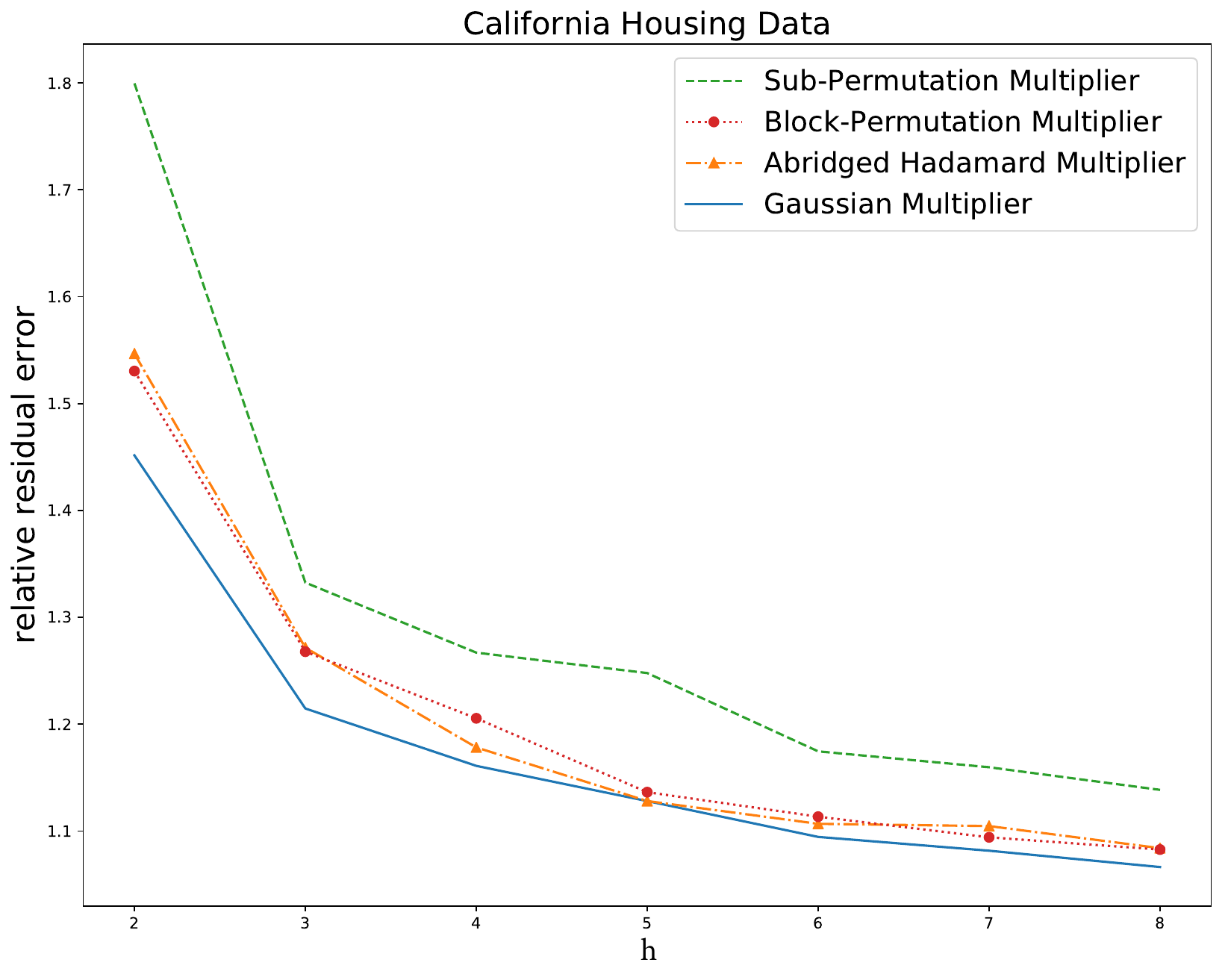}
\caption{Relative residual norm in tests with California Housing Prices Data}\label{LLSPCaliHousing}
\end{figure}







\bigskip


\noindent {\bf Acknowledgements:}
Our work was supported by NSF Grants 
 CCF--1563942 and CCF--1733834
and PSC CUNY Award 69813 00 48.



\end{document}